\documentclass[11pt]{amsart}

\usepackage{geometry}
\geometry{letterpaper}

\swapnumbers
\newtheorem{theorem}{Theorem}[section]
\newtheorem{lemma}[theorem]{Lemma}
\newtheorem{proposition}[theorem]{Proposition}
\newtheorem{corollary}[theorem]{Corollary}
\theoremstyle{definition}

\newtheorem{definitions}[theorem]{Definitions}
\newtheorem{remark}[theorem]{Remark}
\newtheorem{example}[theorem]{Example}

\usepackage{calrsfs}
\usepackage[T1]{fontenc}
\usepackage{textcomp}
\usepackage{times}
\usepackage[scaled=0.92]{helvet}

\usepackage{amssymb}

\newcommand{\G}{\mathbf{G}}
\newcommand{\Q}{\mathbf{Q}}
\newcommand{\Z}{\mathbf{Z}}
\newcommand{\N}{\mathbf{N}}
\newcommand{\F}{\mathbf{F}}
\newcommand{\A}{\mathbf{A}}
\newcommand{\Mat}{\mathrm{Mat}}
\newcommand{\End}{\mathrm{End}}
\DeclareMathOperator{\diag}{diag}
\DeclareMathOperator{\GL}{GL}

\begin{document}

\date{\today\ (version 1.0)}
\title{Matrix divisibility sequences}
\author[G.\ Cornelissen]{Gunther Cornelissen}
\author[J.\ Reynolds]{Jonathan Reynolds}
\address{\normalfont{Mathematisch Instituut, Universiteit Utrecht, Postbus 80.010, 3508 TA Utrecht, Nederland}}
\email{g.cornelissen@uu.nl, jonathan.reynolds@gmx.com}
\subjclass[2010]{11B37, 11C20, 11G05, 11G30, 16U30}
\thanks{The idea of this paper arose after a discussion between the first named author and Thanases Pheidas in the early years of the 21st century. It was revived again through discussions and computer calculations with Graham Everest and Shaun Stevens during a visit to Norwich some time later --- and forgotten again. We thank all these people, along with Tom Ward, for their stimulating interest. The second author is supported by a Marie Curie Intra European Fellowship (PIEF-GA-2009-235210)}
\begin{abstract}
\noindent We show that many existing divisibility sequences can be seen as sequences of determinants of \emph{matrix divisibility sequences}, which arise naturally as Jacobian matrices associated to groups of maps on affine spaces. 
\end{abstract} 

\maketitle

\section{Introduction}

The most famous divisibility sequence is probably the Fibonacci sequence $\{F_n\}_{n \geq 1}$: if $m$ divides $n$, then $F_m$ divides $F_n$. This property is shared by other linear recurrent sequences \cite{Poort}, such as any other Lucas sequence, and by higher degree recurrent sequences known as elliptic divisibility sequences \cite{recseq, Ward}. Recent years have witnessed a revived and increasing interest in such sequences \cite{MR2164113, MR2045409, MR2301226, IMSSS, IngrSilv}, alongside applications in cryptography \cite{Shipsey, StangeL} and undecidability \cite{CZ, MR2480276}. In the current paper, we argue that -- in a non-tautological way -- behind each of these divisibility sequences lies hidden a naturally defined divisibility sequence of matrices, such that the given divisibility sequences occurs as the determinant of the sequence of matrices. 

The plan for the paper is as follows: we shall first introduce the general notion of a matrix divisibility sequence indexed by a semigroup. Then we will see how a faithful representation of the semigroup by endomorphisms of an affine space gives rise to a matrix divisibility sequence, by considering the Jacobian matrices of the endomorphisms. We will show how most of the commonly known divisibility sequences (mentioned briefly above) arise as determinants of matrix divisibility sequences through interesting semigroups of endomorphisms of affine spaces, often associated to a representation of addition in an algebraic group. For example, Lucas sequences are associated to the $2 \times 2$ Borel group. We also construct the \emph{elliptic matrix divisibility sequence} that underlies the usual elliptic divisibility sequences, and prove that it has primitive right matrix divisor classes. 

\section{Matrix divisibility sequences} 

In this section, we introduce general matrix divisibility sequences over a ring $S$, indexed by a semigroup $\Gamma$, and we define primitive divisor classes of matrix divisibility sequences. 

\begin{definitions}
Let $S$ denote a commutative unital ring and $\Mat_d(S)$ the ring of $d\times d$ matrices over $S$. A (right) \emph{divisor class of a matrix} $M \in \Mat_d(S)$ is a coset $\GL_d(S) \cdot M$ of $M$ in the left quotient of $\Mat_d(S)$ by the invertible matrices $\GL_d(S)$ over $S$. A matrix $M$ is said to (right) \emph{divide} a matrix $N$ if there exists a matrix $Q$ such that $N=QM$. If $M$ (right) divides $N$, then any element of the divisor class of $M$ also right divides $N$. 
\end{definitions}

\begin{example} \label{B} An interesting special case of matrix divisibility is that of integer matrices (i.e., $S=\Z$). In this case, the right divisor classes of a matrix $M$ are in bijection with subgroups of the cokernel $\Z^d / M^\top \Z^n$ of left multiplication by the transpose $M^\top$ of $M$, cf.\ \cite{Bhowmik}. 
\end{example} 
 
We will only consider right division from now on, and hence frequently leave out ``right'' from the terminology. 
 
\begin{definitions} \label{defdiv} Let $(\Gamma,\cdot)$ denote a (not necessarily commutative) semigroup. A divisibility sequence of matrices over a commutative ring $S$, indexed by $\Gamma$, is a collection of matrices $$\{M_\alpha\}_{\alpha \in \Gamma}$$ in $\Mat_d(S)$, such that if $\alpha$ right divides $\beta$ in $\Gamma$, then $M_\alpha$ right divides $M_\beta$ in $\Mat_d(S)$. A \emph{primitive divisor class} of a term $M_\alpha$ of such a sequence is a right divisor class of $M_\alpha$ that is not a right divisor class of any $M_\beta$ for $\beta$ a right divisor of $\alpha$. 
\end{definitions}

If $\{M_\alpha\}_{\alpha \in \Gamma}$ is a matrix divisibility sequence, then $$\{\det(M_\alpha)\}_{\alpha \in \Gamma}$$ is a divisibility sequence consisting of elements from the ring $S$. This is obvious from the multiplicativity of the determinant. 

In general, divisibility of matrices is strictly stronger than divisibility of their determinants. For example, the matrices $\diag(1,2)$ and $\diag(2,1)$ are not right or left divisors of each other over the integers, although of course, their determinants are. Thus, it appears that the theory presented here is a strict superset of the existing one. Over a PID, divisibility of matrices is in general also stronger than divisibility of their individual elementary divisors in the Smith Normal Form. 

\section{Matrix divisibility sequences arising from endomorphisms}

We produce a natural source of matrix divisibility sequences, as Jacobian matrices of endomorphisms of affine space. 

\begin{definitions} As before, let $(\Gamma,\cdot)$ denote a semigroup, $S$ a commutative unital ring. Now let $$[\cdot] \colon \Gamma \hookrightarrow \End(\A_S^d) \colon \alpha \mapsto [\alpha] $$
denote a faithful representation of $\Gamma$ into the group (under composition) of endomorphisms of affine $d$-space $\A_S^d$ over $S$ (i.e., morphisms $\A_S^d \rightarrow \A_S^d$).  Let $x \in \A^d(S')$ be a point in some ring extension $S \rightarrow S'$. The \emph{matrix divisibility sequence associated to $(\Gamma,[\cdot])$} is the sequence of Jacobians $\{J_\alpha\}_{\alpha \in \Gamma}(x)$, with $J_\alpha$ and $d \times d$ matrix whose $(i,j)$-entry is given by $$ (J_\alpha)_{i,j}:= \partial ([\alpha](x))_i) / \partial x_j. $$ The associated \emph{determinantal divisibility sequence} is given by $$ \{ \det(J_\alpha)(x)\}_{\alpha \in \Gamma}. $$ 
\end{definitions}

\begin{example} \label{Gm}
A trivial example: set $\Gamma = (\Z_{\geq 0}, \cdot)$, and $[n] \colon \A^1_{\Z} \rightarrow \A^1_{\Z} \colon x \mapsto x^n.$ Then indeed $[mn]=[m] \circ [n]$, and the associated (matrix) divisibility sequence is $nx^{n-1}$. At $x=1$, this is just the divisibility sequence of integers $1,2,3,\dots$. 
\end{example}

The following facts are obvious, but they represent the basic idea in our definition: \emph{derivatives turn composition into multiplication}. 

\begin{proposition}
A matrix divisibility sequence associated to $(\Gamma,[\cdot])$ as before is indeed a matrix divisibility sequence: if $\alpha$ right divides  $\beta$ in $\Gamma$, then for any $x \in \A^d(S')$, the matrix $J_{\alpha}(x)$ right divides $J_\beta(x)$ in the semigroup of $d \times d$-matrices $\Mat_n(S)$, and $\det(J_\alpha(x))$ divides $\det(J_\beta(x))$ in $S$. 
\end{proposition}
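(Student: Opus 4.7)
The plan is to reduce both assertions to a single application of the multivariable chain rule, exploiting that the representation $[\cdot]$ turns the semigroup product into composition of endomorphisms.

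First, I would unwind what ``$\alpha$ right divides $\beta$'' means: by definition, there exists $\gamma\in\Gamma$ with $\beta=\gamma\cdot\alpha$. Since $[\cdot]$ is a semigroup homomorphism into $(\End(\A_S^d),\circ)$, applying the representation gives
\[
[\beta] \;=\; [\gamma]\circ[\alpha].
\]
Evaluating at any $x\in\A^d(S')$ we get $[\beta](x)=[\gamma]([\alpha](x))$, which is the setup where the chain rule applies.

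Next, I would invoke the chain rule for Jacobian matrices of polynomial (morphism) maps between affine spaces, which holds over any commutative ring because it is a formal consequence of the Leibniz rule and linearity of partial derivatives. It yields
\[
J_\beta(x) \;=\; J_{\gamma\circ\alpha}(x) \;=\; J_\gamma\bigl([\alpha](x)\bigr)\cdot J_\alpha(x).
\]
Setting $Q:=J_\gamma([\alpha](x))\in\Mat_d(S')$ exhibits $J_\alpha(x)$ as a right divisor of $J_\beta(x)$, which is the first claim. Taking determinants and using multiplicativity gives $\det(J_\beta(x))=\det(J_\gamma([\alpha](x)))\cdot\det(J_\alpha(x))$, so $\det(J_\alpha(x))$ divides $\det(J_\beta(x))$ in $S'$ (and in $S$ when $x$ is $S$-rational).

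There is essentially no obstacle: the only point that requires any care is checking that the representation convention $[\beta]=[\gamma]\circ[\alpha]$ matches the semigroup operation in the correct order, so that the factor $J_\gamma([\alpha](x))$ appears on the \emph{left} of $J_\alpha(x)$ in the chain rule — this is precisely what is needed for right divisibility rather than left. Once the conventions line up, the proof is a one-line application of the chain rule.
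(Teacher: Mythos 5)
Your proof is correct and follows exactly the same route as the paper's: decompose $\beta=\gamma\cdot\alpha$, apply the homomorphism property to get $[\beta]=[\gamma]\circ[\alpha]$, invoke the chain rule to obtain $J_\beta(x)=J_\gamma([\alpha](x))\cdot J_\alpha(x)$, and take determinants. Your added remarks about the order of composition and the validity of the chain rule over an arbitrary commutative ring are sensible clarifications but do not change the argument.
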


\begin{proof}
Write $\beta = \gamma \cdot \alpha$ in $\Gamma$. Then $[\beta] = [\gamma] \circ [\alpha]$. The chain rule for the Jacobian matrix implies that for any $x \in \A^d(S')$, we have $$ J_\beta(x) = J_\gamma([\alpha]x) \cdot J_\alpha(x) $$ in $\Mat_d(S)$. One can then simply take determinants of this identity. 
\end{proof}

\begin{remark} We have included the case of a general semigroup $\Gamma$, instead of focussing on the (positive) integers as index set for the sequence, because some natural examples arise from elliptic curves with complex multiplication \cite{Streng}, and even noncommutative semigroups occur naturally from supersingular elliptic curves over infinite fields of positive characteristic. 
\end{remark} 

\begin{remark} \label{bundle} 
A more general case would arise when one replaces affine space $\A^d$ by an algebraic variety $X$. If $[\cdot] \colon \Gamma \hookrightarrow \End(X)$ is a representation, then one may consider the pullback of $[\alpha]$ to the tangent bundle
$$ d[\alpha] \colon TX \rightarrow TX ,$$ which then satisfies the chain rule $$d[\alpha\beta](x)=d[\alpha](\beta x) \circ d[\beta](x).$$ Instead of taking a determinant, one may construct the highest exterior power $$\det d[\alpha] \colon {\bigwedge}^d\, TX \rightarrow  {\bigwedge}^d\, TX$$ as automorphisms of the canonical bundle ${\bigwedge}^d\, TX$. In general, however, there is no canonical choice for compatible coordinates in tangent spaces at different points (as there is on affine space), so that this does not lead to a ``numerical'' divisibility sequence. Therefore, we will not consider this more general setting here. 
\end{remark}

\section{A construction of endomorphisms from algebraic groups} \label{groupG}

A natural context for endomorphism representations is one that arises from the endomorphisms of a linear algebraic group, as follows. Let $(\G,+)$ denote an affine algebraic group over a field $k$, and let $\Gamma \subseteq \End_k(\G)$ denote a finitely generated semisubgroup of the algebraic group endomorphisms of $\G$. Fix an affine embedding of $\G$  into $\A^d$. Choose generators $\gamma_1,\dots,\gamma_n$ for the group, and fix an algebraic formula $\langle \gamma_i \rangle$ for the action of the generators on the affine embedding, and fix an algebraic formula for the product and inverse in the group in the given embedding. Now define a representation $[\cdot] \colon \Gamma \rightarrow \End(\A^d)$ by $[\sum a_i \gamma_i](x_1,\dots,x_d):=\sum a_i \langle \gamma_i \rangle(x_1,\dots,x_d)$, where $\Sigma a_i$ is computed using the given formulas for $+$ and $-$ in the group.

\begin{example}
Example \ref{Gm} fits into this framework, if we consider $x \mapsto x^m$ as iterates of the multiplication map on the multiplicative group $\G_m$. A more interesting example is the following: 
\end{example}

\begin{example}[Borel group and Lucas sequences] \label{borel}
Consider the Borel group $\mathbf{B}$ of $2 \times 2$ matrices with the affine embedding
$$ \mathbf{B} \rightarrow \A^3 \colon \left( \begin{array}{cc} X & Y \\ 0 & Z \end{array} \right) \mapsto (X,Y,Z), $$
and the multiplication formula
$$ (X_1,Y_1,Z_1) \odot (X_2,Y_2,Z_2) := (X_1 X_2, X_1 Y_2 + Y_1 Z_2, Z_1 Z_2), $$
corresponding to the product of matrices, and a similar one for the inverse. 
Now, for $n \in \N =\Gamma$, consider the endomorphisms given by $$[n](X,Y,Z) = \underbrace{(X,Y,Z) \odot \cdots \odot(X,Y,Z)}_{n \mbox{ {\footnotesize times}}} = (X^n, Y \frac{X^n-Z^n}{X-Z}, Z^n).$$
The associated matrix divisibility sequences of Jacobians of $[n]$ is 
$$ J_n(X,Y,Z) = \left( \begin{array}{ccc}  nX^{n-1} & 0 & 0 \\ YP(X,Z)  & \frac{X^n-Z^n}{X-Z} & YP(Z,X) \\ 0 & 0 & nZ^{n-1}  \end{array} \right), $$
with $$P(X,Z) = \frac{nX^{n-1}(X-Z) - (X^n-Z^n)}{(X-Z)^2}, $$
and the associated determinant sequence is 
$$ \det(J_n)(X,Y,Z)=n^2 X^{n-1} Z^{n-1}  \frac{X^n-Z^n}{X-Z}, $$
an inocuous modification of the Lucas sequence for $X$ and $Z$ (and independent of $Y$). 
\end{example}

\begin{example}
Similarly, taking powers of matrices $M \in \GL(2)$ leads to a determinantal divisibility sequence of the form 
$$ n \mapsto \frac{n^2}{\beta} \cdot \mathrm{det}(M)^{n-1} \cdot \left( \left(\frac{\alpha-\sqrt{\beta}}{2}\right)^n-\left(\frac{\alpha+\sqrt{\beta}}{2}\right)^n\right)^2 $$
with $\alpha=\mathrm{tr}(M)$ and $\beta=\mathrm{tr}^2(M)-4 \mathrm{det}(M)$. Here, when $\beta=0$ (i.e., the matrix has two identical eigenvalues), the formula should be understood in the limit as $\beta\rightarrow 0$, which gives $n^4(\alpha/2)^{4(n-1)}$.

It could be interesting to consider the determinantal divisibility sequence of more exotic linear algebraic groups. 
\end{example}

One might wonder whether for Lucas sequences, one can do with one dimension less, but this is not even true for Mersenne sequences and general sets of endomorphisms of the affine line, as a simple integration proves:

\begin{proposition}
A generalized Mersenne sequence $\{{x^n-1}\}_{n \geq 1}$ cannot occur as a matrix divisibility sequence associated to a set of endomorphisms of $\A^1$, i.e., in dimension $d=1$. 
\end{proposition}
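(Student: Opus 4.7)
The plan is a proof by contradiction relying on the fact that in dimension $d=1$ the Jacobian is just a derivative, so each endomorphism $[n]$ is recovered from its entry in the sequence up to an additive constant. Suppose there were a semigroup $\Gamma$ with faithful representation $[\cdot]\colon \Gamma \hookrightarrow \End(\A^1_S)$ and an indexing by $n \in \N$ compatible with divisibility, giving polynomials $f_n \in S[X]$ with $f_{mn} = f_m \circ f_n$ (the multiplicative structure on $\N$ being forced by the divisibility pattern $n\mid m \Rightarrow (x^n-1)\mid(x^m-1)$ of the Mersenne sequence), and such that the sequence of Jacobians realizes $f_n'(X) = X^n - 1$ as a polynomial identity in $X$ (in the sense of Example~\ref{Gm}, where $x$ is treated as the affine coordinate, not fixed to a value).

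The first step is a simple integration. Extending scalars to $S \otimes \Q$ so that $n+1$ is invertible gives $f_n(X) = X^{n+1}/(n+1) - X + c_n$ for some constant $c_n$; this extension is harmless because it only serves to track degrees. In particular, $\deg f_n = n+1$.

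The second step is a degree comparison in the composition law. From $f_{mn} = f_m \circ f_n$ one gets $\deg f_{mn} = \deg f_m \cdot \deg f_n = (m+1)(n+1)$, whereas independently $\deg f_{mn} = mn+1$. For any $m,n \geq 1$ the difference $(m+1)(n+1) - (mn+1) = m+n$ is strictly positive, which gives the desired contradiction.

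The main obstacle is really only of a setup nature rather than a mathematical one: one has to be clear that ``$\{x^n - 1\}$ occurring as a matrix divisibility sequence'' forces the Jacobian identity at the level of polynomials in $X$ rather than at just one fixed value, and that the index semigroup must respect the multiplicative structure of $(\N,\cdot)$ in order to match the Mersenne divisibility pattern. After these two cosmetic normalizations, the proof reduces to the one-line integration plus degree bookkeeping that the author alludes to with the phrase ``a simple integration proves.''
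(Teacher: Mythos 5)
Your proof is correct and follows essentially the same route as the paper's: integrate to find $f_n(x)=x^{n+1}/(n+1)-x+c_n$, then observe that $\deg(f_m\circ f_n)=(m+1)(n+1)\neq mn+1=\deg f_{mn}$, contradicting the composition law. The extra remarks about scalar extension and the forced multiplicative indexing are harmless elaborations of the same argument.
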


\begin{proof}
If so, then there are polynomials $f_n$ such that $${x^n-1}=\frac{df_n}{dx}(x).$$ By integration, we find that $$f_n(x)=\frac{x^{n+1}}{n+1} - x + c_n $$ for some constants $c_n$, but then $n \mapsto f_n$ cannot be a representation, because it already fails to satisfy $f_{mn}=f_m \circ f_n$ (for example,  $\deg(f_m(f_n(x))) = (m+1)(n+1) \neq mn+1 = \deg(f_{mn}))$. \end{proof}

In connection with applications of divisibility sequences in logic, we record the following. Recall that a subset $X \subseteq \Z^d$ is called \emph{Diophantine} if there exists an algebraic variety $V$ defined over $\Z$ and a morphism $\pi \colon V \rightarrow \A^{d}$ defined over $\Z$, such that the image of the  set of integral points of $X$  is the given set: $\pi(V(\Z)) = X$.

\begin{proposition}
Suppose $\{M_n\}_{n \in \N}$ is a matrix divisibility sequence that arises as above from an affine algebraic group $\G/\Z$, evaluated at a point $P\in \G(\Z)$. Then $\{M_n\}_{n \in \N}$ is a Diophantine subset of $\Z^{d^2}$, and the associated determinant sequence $\{ \det(M_n) \}$ is a Diophantine subset of $\Z$.  \end{proposition}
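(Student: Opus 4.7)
The plan is to invoke the theorem of Davis--Putnam--Robinson--Matiyasevich (DPRM), which states that a subset of $\Z^k$ is Diophantine if and only if it is recursively enumerable. This machinery seems essentially unavoidable in general: the orbit $\{[n]P : n \in \N\}$ is typically not itself cut out by polynomial equations---already for $\G = \G_m$ or for an elliptic curve, Diophantineness of $\{x^n\}$ or $\{nP\}$ is a nontrivial consequence of Matiyasevich-style encoding.

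First I would express the sequence through a polynomial recursion. The group law $m \colon \G \times \G \to \G$ is given, in the chosen affine embedding, by a tuple of polynomials with coefficients in $\Z$. Setting $Q_1 := P$ and $Q_{n+1} := m(Q_n, P)$, the sequence $Q_n = [n]P$ is obtained by iterated polynomial substitution. Applying the chain rule to the identity $[n+1] = m(\cdot, P) \circ [n]$ yields a recursion of the form
$$ J_{n+1}(P) = A(Q_n, P) \, J_n(P) + B(Q_n, P), $$
with $A$ and $B$ fixed matrices of polynomials built from the two partial Jacobians of $m$. Hence the triple $(n, Q_n, J_n(P))$ is computable from $n$, and the set $\{(n, Q_n, J_n(P)) : n \in \N\} \subseteq \N \times \Z^d \times \Z^{d^2}$ is recursively enumerable.

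By DPRM this set is Diophantine. Projecting onto the last $d^2$ coordinates---Diophantine sets are closed under projection, as this corresponds to existentially quantifying over a coordinate---shows that $\{J_n(P) : n \in \N\}$ is Diophantine in $\Z^{d^2}$. For the determinantal statement, if $\pi \colon V \to \A^{d^2}$ is a $\Z$-morphism realizing the matrix set as a Diophantine image, then $\det \circ \pi \colon V \to \A^1$ realizes $\{\det(M_n)\}$ as a Diophantine subset of $\Z$, since the composite of two morphisms is still a morphism.

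The only genuine content is the appeal to DPRM; the rest is bookkeeping with the polynomial group law and the chain rule. One small technical point is that one first needs $\N$ itself to be Diophantine inside $\Z$---this is Lagrange's four-square identity $n = a^2 + b^2 + c^2 + d^2$---so that restricting the index set to $\N$ does not introduce a non-Diophantine condition.
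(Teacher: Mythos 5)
Your proposal is correct and follows essentially the same route as the paper: both invoke the Davis--Putnam--Robinson--Matiyasevich theorem and reduce the claim to showing that the sequence of Jacobian matrices (and hence of their determinants) is recursively enumerable, which follows from the computability of the polynomial group law. Your version merely spells out the recursion and the projection/composition bookkeeping in more detail than the paper does.
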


\begin{proof}
By the David-Putnam-Robinson-Matijasevich theorem (see, e.g., \cite{DPRM}), Diophantine sets over $\Z$ are the same as recursively enumerable sets over $\Z$. We prove that the set $\{M_n\}$ is recursively enumerable. The formula that expresses $[n]x$ in algebraic terms, for a general point $x \in \G$, is computable in finite time on a Turing machine. The same holds for its Jacobian matrix. Hence also the values of the Jacobian matrices at $P$ are computable in finite time. Now the set $\{M_n\}$ can be enumerated by running through $n$. The same holds for the determinant sequence, since determinants are computable in finite time. 
\end{proof}

\begin{remark}
For a set of endomorphisms of a projective algebraic group (e.g., an abelian variety), one can use the general construction from Remark \ref{bundle}. One may also try to adapt the previous method from affine groups, by fixing an equation for addition in homogeneous coordinates and consider it on the affine cone over the group. (A particularly simple example of such a formula arises from the complete group law on the representation of an elliptic curve in Edwards form, cf.\ \cite{Edwards}, \cite{BL}) However, in general one will then only have a projective composition formula
$$ [\alpha \beta](P) = \lambda_{\alpha,\beta}(P) [\alpha]([\beta]P), $$
for some functions $\lambda_{\alpha, \beta}$ on $G$ --- from which the associated Jacobian matrix divisibility sequence will not in general be multiplicative, but rather satisfy 
$$ J_{\alpha \beta}(P) = \lambda_{\alpha,\beta}(P) J_{\alpha}([\beta]P) J_\beta(P) + (\nabla \lambda_{\alpha,\beta}(P))^\top \cdot [\beta]P. $$ 

If $P \in \G(S')$ is a point whose $\Gamma$-orbit stays within a fixed affine chart, then it is possible to extend the previous method. 

Another approach to general divisibility sequences, based on generalized GCD's, is due to Silverman \cite{Silverman}. For a further approach to (non-divisibility!) sequences in higher genus, see Cantor \cite{Cantor} (where the $r$-th division polynomial is zero at a point $P$ if and only if $rP$ is in the theta-divisor --- compare with \cite{Hone} for another interpretation of these sequences). 

In the next section, we will use a slightly different method for elliptic curves, based on the theory of division polynomials.
\end{remark}

\section{Matrix elliptic divisibility sequences: formal construction}

We will now show how elliptic divisibility sequences fit into the matrix divisibility picture, using division polynomials. 
Let $E$ denote a cubic curve with projective equation
\[
Y^2Z=X^3+AXZ^2+BZ^6.
\]
over the ring $S=\Z[A,B]$. The non-singular points of $E$ over any field containing $S$ form a group. Multiplication on the non-singular points of this cubic curve can be expressed using classical division polynomials
$$ n\cdot (x,y)= \left( \frac{\phi_n(x)}{\psi^2_n(x,y)}, \frac{\omega_n(x,y)}{\psi_n^3(x,y)} \right). $$
We refer to \cite{Cassels}, \cite{Ayad} and \cite{Silverman} (ex.\ III.3.7) for the definition of these polynomials. Here, $\phi_n$ and $$\widetilde{\psi}_n:=\psi_n^2$$ only depend on $x$. We now consider the following map of affine 2-space
$$ [n] \colon \A^2 \rightarrow \A^2 \colon (X,Z) \mapsto \left(Z^{n^2} \phi_n(\frac{X}{Z}), Z^{n^2} \widetilde{\psi}_n(\frac{X}{Z})\right). $$
The multiplicative property $(mn)P=m(nP)$ translates to $[mn]=[m] \circ [n]$ (compare \cite{Cassels}, formula (5)), so that $[\cdot]$ indeed defines a faithful representation of $\Gamma=\N$ as a group of endomorphisms of affine 2-space $\A^2$. Hence the associated sequence of Jacobian matrices is a matrix divisibility sequence, and its determinant is a divisibility sequence in the usual sense. We now establish a formula for these sequences in terms of known division polynomials. For this, we first compute some partial derivatives: 
\[
\frac{\partial X([n]\cdot(X,Z))}{\partial X}=Z^{n^2-1}\phi_n'(X/Z)
\]
and
\begin{eqnarray*}
\frac{\partial X([n]\cdot(X,Z))}{\partial Z}&=&-XZ^{n^2-2}\phi_n'(X/Z)+n^2Z^{n^2-1}\phi_n(X/Z) \\
&=&Z^{n^2-2}(n^2Z\phi_n(X/Z)-X\phi_n'(X/Z)).
\end{eqnarray*}
Also
\[
\frac{\partial Z([n]\cdot(X,Z))}{\partial X}=Z^{n^2-1}\widetilde{\psi}_n'(X/Z)
\]
and
\[
\frac{\partial Z([n]\cdot(X,Z))}{\partial Z}=Z^{n^2-2}(n^2Z\widetilde{\psi}_n(X/Z)-X\widetilde{\psi}_n'(X/Z)).
\]
We conclude:

\begin{proposition} \label{eds1}
The sequence
$$J_n(X,Z):= Z^{n^2-2} \left( \begin{array}{lr} Z\phi'_n(X/Z) & n^2 Z \phi_n(X/Z)-X \phi'_n(X/Z) \\ Z(\psi_n^2)'(X/Z) & n^2 \psi_n^2(X/Z)-X(\psi_n^2)'(X/Z)  \end{array} \right) $$
is a matrix divisibility sequence, which we call a \emph{matrix elliptic divisibility sequence}, with associated so-called 
\emph{determinant elliptic divisibility sequence}
$$ \det(J_n)(X,Z) = n^2 Z^{2(n^2-1)} W(\phi_n, \widetilde{\psi}_n)(X/Z), $$
where $W(f,g)=f'g-fg'$ is the Wronskian determinant of two functions $f,g$, and $\widetilde{\psi}_n:=\psi_n^2$. \qed
\end{proposition}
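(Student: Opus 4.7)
The matrix divisibility assertion requires no additional work: the paragraph preceding the proposition has already argued that $[\cdot]\colon \N \to \End(\A^2)$ is a faithful representation (since $[mn]=[m]\circ[n]$ is equivalent to the classical identity $(mn)P=m(nP)$ expressed through division polynomials). Applying the general proposition from Section~3 on endomorphism-induced matrix divisibility sequences to this representation gives the first claim for free. So the content of the proof is essentially the formal identification of $J_n$ and the determinant computation.

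The plan is to take the four partial derivatives just computed and pack them into the $2\times 2$ Jacobian. In row form,
\[
J_n(X,Z) = \begin{pmatrix} Z^{n^2-1}\phi_n'(X/Z) & Z^{n^2-2}(n^2 Z \phi_n(X/Z) - X \phi_n'(X/Z)) \\ Z^{n^2-1}\widetilde{\psi}_n'(X/Z) & Z^{n^2-2}(n^2 Z \widetilde{\psi}_n(X/Z) - X \widetilde{\psi}_n'(X/Z)) \end{pmatrix}.
\]
Then I would factor the common $Z^{n^2-2}$ out of every entry (pulling one extra $Z$ into the first column), yielding exactly the matrix displayed in the statement. This is a bookkeeping step, not a conceptual one.

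For the determinant, the plan is to apply the $2\times 2$ formula directly. After factoring, the scalar prefactor is $Z^{2(n^2-2)}$, and the bracket is
\[
Z\phi_n'\bigl(n^2 Z \widetilde{\psi}_n - X\widetilde{\psi}_n'\bigr) - Z\widetilde{\psi}_n'\bigl(n^2 Z \phi_n - X\phi_n'\bigr).
\]
The key observation is that the two cross terms involving $X\phi_n'\widetilde{\psi}_n'$ cancel, leaving
\[
n^2 Z^2\bigl(\phi_n'\widetilde{\psi}_n - \phi_n\widetilde{\psi}_n'\bigr) = n^2 Z^2 \cdot W(\phi_n,\widetilde{\psi}_n)(X/Z).
\]
Combining with the prefactor gives $n^2 Z^{2(n^2-1)} W(\phi_n,\widetilde{\psi}_n)(X/Z)$ as claimed.

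There is no real obstacle; the only point worth flagging is the cancellation of the $X$-terms in the determinant, which is what allows the answer to collapse into a clean Wronskian times a power of $Z$ and a factor of $n^2$. Once that cancellation is observed, the rest is automatic.
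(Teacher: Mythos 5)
Your proposal is correct and matches the paper's own argument exactly: the proposition is proved by the partial-derivative computations immediately preceding it, assembled into the Jacobian with the $Z^{n^2-2}$ factored out, plus the observation that the $X\phi_n'\widetilde{\psi}_n'$ cross terms cancel in the determinant. Note only that your bracket correctly carries the factor $Z$ on $n^2 Z\widetilde{\psi}_n$ in the $(2,2)$ entry, which the displayed matrix in the statement appears to omit by typo; your version is the one consistent with the computed partial derivatives and with the stated determinant formula.
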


\begin{remark}
By Cassels' Theorem I in \cite{Cassels}, the polynomial derivatives $\phi_n'(x)$ and $\widetilde{\psi}_n'(x)$ have all their coefficients divisible by $n$; we conclude that the matrix $J_n(X,Z)$ is divisible by the diagonal matrix $\diag(n,n)$. 
\end{remark}

We can further simplify the Wronskian determinant in Proposition \ref{eds1}, as follows: by taking derivatives on both sides of $$x(n\cdot(x,y)) = \frac{\phi_n(x)}{\widetilde{\psi}_n(x)}$$ we find that 
$$ \frac{dx(n\cdot(x,y))}{dx} = \frac{W(\phi_n,\widetilde{\psi}_n)}{\widetilde{\psi}_n^2}. $$
To use $\wp$-functions, we switch to classical Weierstrass form, by writing $x=x_1/36$ and $y=y_1/432$, so that $(x_1,y_1)$ satisfies the Weierstrass equation in traditional form $y_1^2=4x_1^3-g_2 x - g_3$ for $g_2=-5184A$ and $g_3=-186624B$, and we can write $x_1=\wp(z), y_1=\wp'(z)$ for $\wp$ the Weierstrass $\wp$-function of the corresponding lattice. Then 
\[
\frac{dx(n\cdot(x,y))}{dx} =\frac{1}{36}\frac{d \wp(nz)}{dx}=\frac{n}{36}\wp'(nz)\frac{dz}{dx}=n\frac{\wp'(nz)}{\wp'(z)}, \]
which we further simplify to 
\[ n\frac{y([n](x,y))}{y}=\frac{2n}{\psi_2}y([n](x,y))=\frac{1}{\psi_n^4}
\left( \frac{n\psi_{2n}}{\psi_2} \right),
\]
so that we finally find
\begin{proposition} \label{eds2}
The determinant elliptic divisibility sequence from Proposition \ref{eds1} equals 
\[\det{J_n}(X,Z)=n^3 Z^{2(n^2-1)} \frac{\psi_{2n}}{\psi_2}(X/Z) = 2n^3 Z^{2(n^2-1)}  \frac{\psi_n \omega_n}{\psi_2} (X/Z). \qed
\]  
\end{proposition}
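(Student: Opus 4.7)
The plan is to reduce the claim to the Wronskian expression for $\det J_n$ already obtained in Proposition \ref{eds1}, and then rewrite that Wronskian in closed form using the chain rule on the elliptic curve together with a standard duplication identity among the division polynomials. In other words, I would show
\[
W(\phi_n,\widetilde{\psi}_n)(x) = n\,\frac{\psi_{2n}(x)}{\psi_2(x)},
\]
so that the first equality in the proposition follows by substitution into Proposition \ref{eds1}; the second equality is then a direct application of $\psi_{2n} = 2\psi_n\omega_n$.

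For the Wronskian identity, I would carry out the computation already laid out in the paragraph preceding the statement. First, differentiating the defining relation $x(n\!\cdot\!(x,y)) = \phi_n(x)/\widetilde{\psi}_n(x)$ with respect to $x$ and applying the quotient rule immediately yields
\[
\frac{d\,x(n\!\cdot\!(x,y))}{dx} \;=\; \frac{W(\phi_n,\widetilde{\psi}_n)(x)}{\widetilde{\psi}_n(x)^2}.
\]
Next, rescaling $(x,y) = (x_1/36, y_1/432)$ to Weierstrass form and parametrising $x_1 = \wp(z)$, $y_1=\wp'(z)$ lets me compute the same derivative analytically: the chain rule and $\wp'(z)\,dz = dx_1$ give $d\,x(n\!\cdot\!(x,y))/dx = n\,\wp'(nz)/\wp'(z) = n\,y([n](x,y))/y$. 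Writing $y = \psi_2/2$ and $y([n](x,y)) = \omega_n/\psi_n^3$, this equals $2n\omega_n/(\psi_2\psi_n^3)$, which by the duplication identity $\psi_{2n} = 2\psi_n\omega_n$ becomes $n\psi_{2n}/(\psi_2\psi_n^4)$. Since $\widetilde{\psi}_n^2 = \psi_n^4$, multiplying by $\widetilde{\psi}_n^2$ produces exactly the asserted Wronskian identity.

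Substituting this into the formula $\det(J_n)(X,Z) = n^2 Z^{2(n^2-1)} W(\phi_n,\widetilde{\psi}_n)(X/Z)$ of Proposition \ref{eds1} yields the first claimed equality, and the second follows from applying $\psi_{2n} = 2\psi_n\omega_n$ once more inside the expression.

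The only nontrivial input is the duplication formula $\psi_{2n} = 2\psi_n\omega_n$, which is a classical identity among division polynomials (it is essentially the statement that $y(2nP) = \omega_{2n}/\psi_{2n}^3$ is compatible with doubling in the formal group, and can be read off from the recurrences recorded in \cite{Cassels} or \cite{Silverman}, ex.\ III.3.7). The potential obstacle is really bookkeeping: one must make sure that the $\wp$-normalisation factors $36$ and $432$ cancel correctly when pulling the derivative through, and that the identity $\widetilde{\psi}_n = \psi_n^2$ is applied consistently so that the resulting expression is polynomial in $(X,Z)$ rather than merely rational. Once these verifications are made, no further work is required.
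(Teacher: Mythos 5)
Your proposal is correct and follows essentially the same route as the paper: the paper's own argument is exactly the displayed computation preceding the proposition (differentiate $x(n\cdot(x,y))=\phi_n/\widetilde{\psi}_n$ to get the Wronskian over $\widetilde{\psi}_n^2$, evaluate the same derivative via the $\wp$-parametrisation as $n\,\wp'(nz)/\wp'(z)=n\,y([n](x,y))/y$, and convert to division polynomials using $\psi_2=2y$ and $y(nP)=\omega_n/\psi_n^3=\psi_{2n}/(2\psi_n^4)$), with the proposition then carrying a \qed. Your explicit isolation of the identity $W(\phi_n,\widetilde{\psi}_n)=n\,\psi_{2n}/\psi_2$ and of the duplication relation $\psi_{2n}=2\psi_n\omega_n$ matches the paper's reasoning step for step.
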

  
This result shows that every elliptic divisibility sequence occurs (up to passing to a field extension to divide a given point by $2$) as a determinant divisibility sequence. 

\begin{remark} 
We have already seen how Lucas sequences arise from the $2\times 2$ Borel group. Since all Lucas sequences also occur as elliptic divisibility sequence for singular cubics, we immediately find from the previous section that they, too, fit into this framework (\cite{Ward}, Thm. 22.1).
\end{remark}

\section{Matrix elliptic divisibility sequences: integral values and primitive divisors}

We now turn to the issue of actually substituting a rational point on the curve into these new sequences. 

\begin{proposition} \label{bf} Suppose that $P=(x,y)$ is a rational point of infinite order on an elliptic curve $E/\Q$ with chosen short Weierstrass equation with integral coefficients, and write $x=a/b^2$ in coprime integers $a,b$. 
The determinantal divisibility sequence $$\det J_n(a,b^2)$$ is integer valued, and has primitive prime divisors for $n$ sufficiently large. 
\end{proposition}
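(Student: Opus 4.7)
The plan is to reduce the statement to Silverman's theorem on primitive prime divisors of classical elliptic divisibility sequences via a closed-form identity. Starting from Proposition \ref{eds2} with $(X,Z)=(a,b^2)$ and $P=(a/b^2,y)$,
$$ \det J_n(a,b^2) \;=\; n^3 \, b^{4(n^2-1)} \, \frac{\psi_{2n}(P)}{\psi_2(P)}. $$
Since the Weierstrass equation has integral coefficients, the usual denominator lemma gives $y=c/b^3$ for some $c\in\Z$, and the classical denominator estimate for division polynomials (cf.\ \cite{Ayad}) gives $\psi_k(P) = B_k/b^{k^2-1}$ with $B_k \in \Z$ the standard elliptic divisibility sequence of $(E,P)$ (so $B_2 = 2c$). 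The $b$-powers telescope to
$$ \det J_n(a,b^2) \;=\; n^3 \cdot \frac{B_{2n}}{B_2}, $$
and integrality is immediate from $B_2 \mid B_{2n}$ in the divisibility sequence $\{B_k\}$.

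For the primitive divisor claim, I would invoke Silverman's primitive divisor theorem for EDS (\cite{MR2164113}; see also \cite{MR2301226}): for every $n$ large enough, $B_{2n}$ admits a primitive prime divisor $p$, i.e.\ $p \mid B_{2n}$ while $p \nmid B_k$ for $1 \leq k < 2n$. To transfer such $p$ to a primitive divisor of the sequence $\{\det J_n(a,b^2)\}$, I must rule out divisibility of the earlier terms $\det J_m(a,b^2) = m^3 B_{2m}/B_2$ by $p$; the potential spoiler is the prefactor $m^3$, which primitivity in $\{B_k\}$ does not constrain. The cleanest remedy is to arrange $p > n$: then for any $1 \leq m < n$ one has $v_p(m^3)=0$ (as $p>m$), $v_p(B_{2m})=0$ (as $2m<2n$, by primitivity), and $v_p(B_2)=0$, so $v_p(\det J_m(a,b^2))=0$, whereas $v_p(\det J_n(a,b^2)) \geq v_p(B_{2n}) \geq 1$, confirming that $p$ is a primitive prime divisor of $\{\det J_n(a,b^2)\}$.

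The main obstacle is securing the quantitative lower bound $p > n$. Mere existence of primitive divisors is insufficient, because a small primitive $p$ of $B_{2n}$ with $p \mid n$ could reappear in the $m^3$ prefactor of $\det J_m$ for $m = n/p$ and destroy primitivity. Handling this requires the quantitative strengthening of Silverman's theorem: together with the canonical height asymptotic $\log|B_n| \sim \hat{h}(P)\, n^2$, the bounds of \cite{MR2164113} on the ``non-primitive'' part of $B_n$ force the primitive prime divisors of $B_{2n}$ to exceed any fixed polynomial in $n$. Equivalently, the total contribution of primes $p \leq n$ to $\log|B_{2n}/B_2|$ is $O(n \log n)$, negligible compared with $\log|B_{2n}| \sim 4\hat{h}(P)\,n^2$, so some primitive prime of $B_{2n}$ of size greater than $n$ must appear once $n$ is large enough. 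With that bound in hand, the integrality argument, the reduction to $n^3 B_{2n}/B_2$, and the $v_p$-check are all routine.
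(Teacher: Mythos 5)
Your overall strategy coincides with the paper's: reduce $\det J_n(a,b^2)$ to $n^3 B_{2n}/B_2$ via Ayad's description of the denominators, invoke Silverman's elliptic Zsigmondy theorem for $\{B_k\}$, and then neutralize the prefactor. You also correctly isolate the one real difficulty, namely that the factor $m^3$ in an earlier term could reabsorb the primitive prime $p$ of $B_{2n}$. The gap is in how you resolve it. You propose to force $p>n$ via a ``quantitative strengthening'' of the primitive divisor theorem, resting on the claim that the contribution of primes $p\le n$ to $\log|B_{2n}/B_2|$ is $O(n\log n)$. That claim is unproved and, as stated, false in general: a prime $p$ with rank of apparition $r_p$ a proper divisor of $2n$ contributes $v_p(B_{r_p})\log p$ plus a small term, and summing over such primes is a priori only bounded by $\sum_{d\mid 2n,\,d<2n}\log|B_d|$, which is of order $\hat{h}(P)n^2$ (the largest proper divisor of $2n$ being $n$); moreover the primitive primes of $B_n$ itself can all lie just below $n$, so primes $\le n$ can genuinely account for a positive proportion of $\log|B_{2n}|$. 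So the key step of your argument is not established. It is also unnecessary: if $p$ is a primitive prime divisor of $B_{2n}$ (and $p\nmid\Delta_E$), then $P\bmod p$ has exact order $2n$ in $E(\F_p)$, so the Hasse--Weil bound gives $2n\le \#E(\F_p)\le p+1+2\sqrt{p}$; if $p\le n$ this forces $n<6$. This one line --- which is the paper's argument --- already gives $p\nmid m$ for every $m\le n$ (indeed $p>n$ for $n$ large) and disposes of the $m^3$ prefactor without any quantitative input beyond Hasse. (Also, the primitive divisor theorem you need is \cite{SilPrim}, not \cite{MR2164113}.)

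A secondary imprecision: the identity $\psi_k(P)=B_k/b^{k^2-1}$ is exact only when $P$ is nonsingular modulo every prime. For a general integral (not necessarily minimal) short Weierstrass model, Ayad's theorem gives $b^{2k^2}\psi_k^2(a/b^2)=B_k^2Q_k$ with $Q_k$ supported on primes dividing $\Delta_E$, so the correct statement is $\det J_n(a,b^2)=n^3(B_{2n}/B_2)\,Q_n'$ with a correction $Q_n'$ supported on bad primes. This does not threaten integrality --- which is in any case immediate, since the entries of $J_n(X,Z)$ are polynomials over $\Z[A,B]$ --- but it does mean you must also check that the primitive prime $p$ avoids the finitely many primes of $\Delta_E$ before concluding $v_p(\det J_m(a,b^2))=0$ for the earlier terms; this holds for $n$ large because each bad prime has a fixed rank of apparition, and is the reason the paper imposes the condition that $2nP$ not be an $S$-integer.
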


\begin{proof}
First of all, we quote a result of Ayad (\cite{Ayad}) to the effect that if we write $$nP=\left(\frac{A_n}{B_n^2}, y_n \right),$$ with $A_n, B_n$ coprime integers, then $$b^{2n^2} \psi^2_n(a/b^2) = B_n^2 Q_n,$$ where $Q_n$ is only divisible by primes $p$ for which $P$ is singular modulo $p$ on the given model (so in particular, $Q_n$ has only prime factors from the divisors of the discriminant $\Delta_E$ of the given curve). This means that $$\det J_n(a,b^2) = n^3 b^{4(n^2-1)} \frac{\psi_{2n}}{\psi_2}(a/{b^2}) = n^3 \frac{B_{2n}}{B_2}\cdot Q'_{n},$$ where $Q'_n$ has only prime divisors from $\Delta_E$.  

Now Silverman has proven the elliptic analogue of Zsigmondy's theorem \cite{SilPrim}, implying that $B_{2n}/B_2$ has a primitive prime divisor, say, $p$, for sufficiently large $n$ (since $P$ has infinite order in $E(\Q)$). We claim that $p$ is coprime to $n$ for $n$ sufficiently large. Indeed, suppose $p \mid n$. Since $p$ is prime and primitive, $P\ \mathrm{mod}\ p$ has order $2n$ in $E(\F_p)$, so that by the Hasse-Weil bound $$2n<p+1+2\sqrt{p}<n+1+2\sqrt{n},$$ leading to $n<6$. Hence for $n$ sufficiently large ($n>6$, $n$ large enough for Silverman's result to hold and for $2nP$ not to be an $S$-integer, where $S$ contains the primes dividing $\Delta_E$), $p$ is also primitive for $\det J_n(a,b)$.
\end{proof}

%
%
%
%
%
%
%

We finish this section by proving a matrix version of the existence of primitive divisors, based on the following general lemma:  

\begin{lemma}
Let $\{M_n\}_{n \in \N}$ denote a matrix divisibility sequence in integral matrices $M_n \in Mat_n(\Z)$. If the associated determinantal divisibility sequence $\{ \det(M_n) \}_{n \in \N}$ has primitive prime divisors, then the matrix divisibility sequence has primitive right divisor classes.
\end{lemma}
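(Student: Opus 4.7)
The plan is to exhibit a very natural primitive class explicitly. Fix an index $n$ such that $\det M_n$ admits a primitive prime divisor $p$, so $p \mid \det M_n$ but $p \nmid \det M_k$ for every proper right divisor $k$ of $n$. I would propose that the divisor class $\GL_d(\Z)\cdot M_n$ of $M_n$ itself serves as a primitive right divisor class.

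The first step is purely definitional: $M_n$ trivially right divides itself, so $\GL_d(\Z)\cdot M_n$ is a right divisor class of $M_n$. For the second step, suppose for contradiction that this same class also arises as a right divisor class of $M_k$ for some proper right divisor $k$ of $n$. Unpacking, this means some $N = U M_n$ (with $U \in \GL_d(\Z)$) right divides $M_k$, that is, $M_k = Q U M_n$ for some $Q \in \Mat_d(\Z)$; in particular $M_n$ right divides $M_k$. Taking determinants yields $\det M_n \mid \det M_k$ in $\Z$.

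On the other hand, since $\{M_n\}$ is a matrix divisibility sequence and $k$ is a right divisor of $n$, $M_k$ right divides $M_n$, so $\det M_k \mid \det M_n$. Combining the two divisibilities forces $|\det M_n| = |\det M_k|$, which directly contradicts the existence of the primitive prime $p$. Hence no such $k$ exists, and $\GL_d(\Z)\cdot M_n$ is primitive.

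The only delicate point is reading the primitivity definition correctly---specifically, recognising that ``$\GL_d(\Z)\cdot M_n$ is a right divisor class of $M_k$'' is equivalent to $M_n$ right dividing $M_k$, because right divisibility is invariant under left multiplication by $\GL_d(\Z)$. Beyond that, the lemma falls out of multiplicativity of the determinant together with the symmetric determinantal divisibility one obtains when two matrices right divide each other. I do not foresee any genuine obstacle.
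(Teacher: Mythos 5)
Your argument is correct under the paper's definitions, but it takes a genuinely different and more elementary route than the paper's own proof. You exhibit the class $\GL_d(\Z)\cdot M_n$ of the term itself as the primitive class: if it were a right divisor class of $M_k$ for a proper right divisor $k$ of $n$, then $M_n$ would right divide $M_k$, forcing $\det M_n \mid \det M_k$ and hence $p \mid \det M_k$, against primitivity of $p$ (your extra step deriving $|\det M_n|=|\det M_k|$ from the two-way divisibility is harmless but not needed). The paper instead invokes the correspondence of its Example 2.2 (Bhowmik--Ramar\'e) between right divisor classes of $M$ and subgroups of the cokernel $\Z^d/M^\top\Z^d$, and uses the fact that $p$ divides an elementary divisor of $M_n$ but none of any $M_m$ with $m\mid n$, $m\neq n$, to produce a primitive class corresponding to a subgroup of order $p$. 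Your approach is shorter and bypasses the Smith-normal-form/cokernel machinery entirely; what the paper's approach buys is a more substantive conclusion. Under the cokernel correspondence your class is the one attached to the trivial subgroup --- the matrix analogue of observing that $a_n$ is a divisor of $a_n$ that divides no earlier term --- whereas the paper's class is the genuine analogue of a primitive \emph{prime} divisor. Since Definition 2.3 places no restriction excluding the class of the term itself, your proof does establish the lemma as literally stated; but be aware that it hinges on that reading, and would collapse under any sharpening of ``primitive divisor class'' that requires a proper (or prime-like) divisor, while the paper's argument would survive unchanged.
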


\begin{proof}
A nice way to organize the proof is by using the correspondence from Example \ref{B}, which implies that $M_n$ has a primitive right divisor if and only if $\Z^d/M_n^\top\Z^d$ has a subgroup that is not in the image of any of the natural reduction maps $\Z^d/M_m^\top \Z^d \rightarrow \Z^d/M_n^\top\Z^d$ for any $m \mid n$ with $m \neq n$. But since we assume that $\det(M_n)$ has a prime divisor $p$ that doesn't divide any $\det(M_m)$ for any $m \mid n$ with $m \neq n$, $p$ divides one of the elementary divisors of $M_n$, but none of those of such $M_m$. This implies that the subgroup $\Z^d/p \Z^d$ corresponding to $p$ has non-trivial reduction, so corresponds to a primitive right divisor class. 
\end{proof}

\begin{corollary}[Elliptic matrix Zsigmondy theorem] Suppose that $P=(x,y)$ is a rational point on an elliptic curve $E/\Q$ with chosen short Weierstrass equation with integral coefficients, and write $x=a/b^2$ in coprime integers $a,b$.
There exists an integer $N$ such that all the terms of a matrix elliptic divisibility sequence $\{J_n(a,b^2)\}_{n \in \N}$ with $n>N$ have primitive right matrix divisors. 
\end{corollary}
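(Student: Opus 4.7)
The plan is to combine Proposition~\ref{bf} with the preceding Lemma, once we have established that the matrices $J_n(a,b^2)$ themselves take values in $\mathrm{Mat}_2(\Z)$.

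First I would check integrality of the matrix entries. Using the explicit form of $J_n(X,Z)$ given in Proposition~\ref{eds1}, each entry is of the form $Z^{n^2-2}$ times a specific combination of $\phi_n(X/Z)$, $\widetilde\psi_n(X/Z)$ and their derivatives. The a priori obstruction to being a polynomial in $X, Z$ is the appearance of the top-degree terms $(X/Z)^{n^2}$ in $\phi_n(X/Z)$ and $(X/Z)^{n^2-1}$ in $\phi'_n(X/Z)$ (and similarly for $\widetilde\psi_n$). In the off-diagonal entries $n^2 Z \phi_n(X/Z) - X\phi'_n(X/Z)$ and $n^2 \widetilde\psi_n(X/Z) - X \widetilde\psi'_n(X/Z)$, the leading monomials cancel exactly, so these brackets have bounded enough denominator for the prefactor $Z^{n^2-2}$ to clear it; the two diagonal entries are dealt with similarly. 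The upshot is that the entries of $J_n(X,Z)$ are integer polynomials in $X,Z$, so the specialization $J_n(a,b^2)$ lies in $\mathrm{Mat}_2(\Z)$.

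Next, assuming $P$ has infinite order (which is implicit, since otherwise the matrix eventually degenerates via $\psi_n(P)=0$), I would apply Proposition~\ref{bf} to obtain an integer $N$ such that $\det J_n(a,b^2)$ admits a primitive prime divisor for every $n>N$. Finally, the preceding Lemma applies verbatim to the integer matrix divisibility sequence $\{J_n(a,b^2)\}_{n\in\N}$: for each such $n$, a primitive prime divisor $p$ of $\det J_n(a,b^2)$ corresponds (via Example~\ref{B}) to a cyclic subgroup of order $p$ in $\Z^2/J_n(a,b^2)^\top \Z^2$ that is not in the image of the reduction map from $\Z^2/J_m(a,b^2)^\top\Z^2$ for any proper divisor $m$ of $n$, yielding the desired primitive right matrix divisor class.

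The steps of the argument are quite straightforward given what precedes it; I would expect the only real verification to be the integrality check in the first paragraph (cancellation of the top-degree terms), while the Zsigmondy-type input and the passage from determinants to matrix divisor classes are already packaged as Proposition~\ref{bf} and the Lemma respectively.
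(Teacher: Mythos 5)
Your proposal is correct and follows essentially the same route as the paper, which simply combines Proposition~\ref{bf} with the preceding Lemma in a one-line proof. The extra work you do --- verifying that the entries of $J_n(a,b^2)$ are integers via cancellation of the top-degree terms, and noting the implicit infinite-order hypothesis on $P$ needed to invoke Proposition~\ref{bf} --- fills in details the paper leaves unstated, but does not change the argument.
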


\begin{proof}
This follows from the previous lemma since the associated determinantal sequence (cf.\ Proposition \ref{eds1}) has primitive prime divisors by Proposition \ref{bf}. 
\end{proof}

\begin{remark}
One may also ask for `converse theorems' in the following style: if the height of the entries of the matrices $\{M_n\}$ has a specific growth behaviour in $n$, does it follow (at least generically) that its determinant sequence has a `related' growth behaviour?
\end{remark}

\begin{remark} Linear and elliptic divisibility sequences satisfy recurrence relations (provided their terms are chosen with the right sign), so we ask: 
Is there a choice of representatives for the divisor classes corresponding to a Lucas or elliptic \emph{matrix} divisibility sequence as in Proposition \ref{eds1}, such that these representative matrices themselves satisfy a polynomial recurrence relation (i.e., with coefficients that do not depend on the index of the term of the sequence)? We have checked by direct computation that it is \emph{not} the case that the ``Borel'' matrix sequence $J_n(X,Y,Z)$ from Example \ref{borel} satisfies a second order linear recurrence in matrices of the form $$J_n = A \cdot J_{n-1}+B \cdot J_{n-2}$$
for matrices $A=A(X,Y,Z)$ and $B=B(X,Y,Z)$ \emph{independent of $n$}. One might argue that in the non-commutative ring of matrices, a second order linear recurrence should be of the form 
$$ J_n = A \cdot J_{n-1} \cdot B+C \cdot J_{n-2}\cdot D $$
for matrices $A,B,C,D$ independent of $n$, but we did not investigate this possibility any further. 
\end{remark}

\bibliographystyle{amsplain}

\end{document}